\newtheorem{thm}{Theorem}[section]
\newtheorem{prop}[thm]{Proposition}
\newtheorem{lem}[thm]{Lemma}
\newtheorem{cor}[thm]{Corollary}
\newtheorem{conj}[thm]{Conjecture}
\newtheorem{qst}[thm]{Question}
\newtheorem{ex}[thm]{Example}
\newtheorem*{introcor}{Theorem (Corollary~\ref{maincor})}
\theoremstyle{definition}
\theoremstyle{remark}
\newtheorem{rmk}[thm]{Remark}
\newcommand{\N}{\mathbb N}
\renewcommand{\phi}{\varphi}
\newcommand{\im}{\mathop{\mathrm{Im}}\nolimits}
\renewcommand{\ker}{\mathop{\mathrm{Ker}}\nolimits}
\newcommand{\onto}{\twoheadrightarrow}
\renewcommand{\i}[1]{\mathfrak{#1}}
\newcommand{\m}{\i{m}}
\renewcommand{\a}{\mathbf{a}}
\newcommand{\depth}{\mathop{\mathrm{depth}}\nolimits}
\newcommand{\ann}{\mathop{\mathrm{Ann}}\nolimits}
\newcommand{\codim}{\mathop{\mathrm{codim}}\nolimits}
\newcommand{\pd}{\mathop{\mathrm{pd}}\nolimits}
\newcommand{\reg}{\mathop{\mathrm{reg}}\nolimits}
\newcommand{\tor}{\mathop{\mathrm{Tor}}\nolimits}
\renewcommand{\hom}{\mathop{\mathrm{Hom}}\nolimits}
\newcommand{\ext}{\mathop{\mathrm{Ext}}\nolimits}
\renewcommand{\*}{\bullet}
\title{On the Maximal Graded Shifts of Ideals and Modules}
\author[J. McCullough]{Jason McCullough}
\address{Department of Mathematics, Iowa State University, Ames, IA 50011}
\email{jmccullo@iastate.edu}
\begin{document}

\dedicatory{Dedicated to Professor Craig Huneke on the occasion of his sixty-fifth birthday.}

\begin{abstract}
We generalize a result of Eisenbud-Huneke-Ulrich on the maximal graded shifts of a module with prescribed annihilator and prove a linear regularity bound for ideals in a polynomial ring depending only on the first $p - c$ steps in the resolution, where $p = \pd(S/I)$ and $c = \codim(I)$. \end{abstract}

\maketitle

\section{Introduction}

Let $S = k[x_1,\ldots,x_n]$ be  a polynomial ring over a field $k$, viewed as a standard graded ring, and let $M$ be a finitely generated graded $S$-module.  Let $F_\*$ be the minimal graded free resolution of $M$ over $S$, so that $F_i = \oplus_{j} S(-j)^{\beta_{ij}(M)}$.  Following the notation in \cite{ES}, we denote the $i$th maximal and minimal graded  shifts as 
\begin{align*}
T_i(M) &= \max\{j\,|\,\tor_i^S(M,k)_j \neq 0\} = \max\{j\,|\,\beta_{ij}(M) \neq 0\},\\
t_i(M) &= \,\min\{j\,|\,\tor_i^S(M,k)_j \neq 0\} = \,\min\{j\,|\,\beta_{ij}(M) \neq 0\}.
\end{align*}
Other authors \cite{ACI2, HS, M} have used $t_i(M)$ to denote the maximal shifts but the above  notation seems more apt to the situation of dealing with both maximal and minimal shifts.  The maximal shifts are of primary interest due to their connection with Castelnuovo-Mumford regularity, that is 
\[\reg(M) = \max_{0 \le i \le \pd(M)}\{T_i(M) - i\}.\]
%It is useful to define notation measuring the regularity up to a certain stage of the resolution.  Following \cite{ACI2}, we set
%\[\reg_r(M) = \max_{0 \le i \le r}\{T_i(M) - i\}\]
%so that $\reg(M) = \reg_{\pd(M)}(M)$ and $\reg_i(M) \le \reg_{i+1}(M)$ for all $i\ge 0$. 
%We will at times omit the $M$ and write $t_i$, $\beta_{ij}$ when the module in question is clear. \\
%Quote: Meyer-Mayr ideals, McCullough result, Ullery result
Fix $M = S/I$ to be a cyclic module, where $I$ is a homogeneous ideal of $S$.  A motivating question for this paper, which has received a lot of recent interest (see e.g. \cite{ACI1}, \cite{ACI2},  \cite{ES}, \cite{HS}) is the following:

\begin{qst} Which sequences of nonnegative integers occur as $\{T_i(S/I)\}$ for some homogeneous ideal $I$ in a polynomial ring $S$?
\end{qst}

There is a doubly exponential upper bound on $\reg(S/I)$ (and hence all of the $T_i(S/I)$) in terms of $T_1(S/I)$, the maximal degree of a minimal generator of $I$.  (See \cite{BM} and \cite{CS}.)  Examples derived from the Mayr-Meyer ideals  \cite{Ko} show that this upper bound is nearly optimal in that families of ideals generated by quadrics in $n$ variables can have first syzygies whose degree grows doubly exponentially in $n$.  Other examples of ideals with large regularity (e.g. \cite{BMNSSS}, \cite{BM}, \cite{BSt})  also have large degree first syzygies.  Later examples of ``designer ideals'' constructed by Ullery \cite{Ullery} showed that for any strictly increasing sequence of positive integers $2 \le a_1 < a_2 < \cdots < a_m$, there is an ideal $I$ in a polynomial ring $S$ such that $T_i(S/I) = a_i$ for all $1 \le i \le m$.  However, these ideals also satisfy $T_{m+i}(S/I) = a_m + i$ for $i > 0$ (i.e. there is a linear strand at the end of the resolution) and the resolutions tend to be rather long.  These examples motivate the idea that ideals whose resolutions show large degree jumps early in the resolution must be ``paid for'' by smaller degree jumps later in the resolution.  The first result quantifying this idea is the following weak convexity result due to Eisenbud-Huneke-Ulrich:

\begin{thm}[Eisenbud-Huneke-Ulrich {\cite[Corollary 4.1]{EHU}}]\label{EHUthm1} Let $I$ be a homogeneous ideal in $S = k[x_1,\ldots,x_n]$ such that $\dim(S/I) \le 1$.  Then
\[T_n(S/I) \le T_i(S/I) + T_{n-i}(S/I),\]
for all $0 \le i \le n$.
\end{thm}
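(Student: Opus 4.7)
The plan is to reduce to the Artinian case and then use graded local duality to reformulate the claim as a statement about the canonical module of $S/I$.

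If $\pd_S(S/I) < n$ then $T_n(S/I) = -\infty$ and the inequality is vacuous, so I may assume $\pd_S(S/I) = n$. Combined with the hypothesis $\dim(S/I) \le 1$, the Auslander--Buchsbaum formula then forces $\depth(S/I) = 0$, equivalently $\mm \in \ass(S/I)$. Let $H := H^0_{\mm}(S/I)$ denote the nonzero finite-length submodule of $S/I$. The short exact sequence
\[
0 \to H \to S/I \to S/(I :_S \mm^\infty) \to 0
\]
has quotient of depth $\ge 1$ and hence projective dimension $\le n - 1$; the associated long exact sequence of $\tor$ therefore yields $T_n(S/I) = T_n(H)$ while comparing the intermediate shifts of $H$ to those of $S/I$. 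It thus suffices to prove the analogous inequality $T_n(H) \le T_i(H) + T_{n-i}(H)$ for the Artinian module $H$.

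Next, I apply graded local duality. There is a canonical isomorphism $E := \ext^n_S(H,S) \cong \hom_k(H,k)(n)$, and dualizing the minimal free resolution of $H$ by $\hom_S(-,S)$ (with an appropriate reindexing) produces the minimal free resolution of $E$, giving the identity
\[
\beta_{i,j}(E) = \beta_{n-i,\,-j}(H)
\]
for all $i$ and $j$. Through this correspondence, the desired inequality for $H$ transforms into the lower bound $t_0(E) \ge t_i(E) + t_{n-i}(E)$ on the minimum shifts of $E$.

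To obtain this lower bound, I would use the DG-algebra structure of the Koszul complex $K(\x;S)$ and the module structure it induces on $K(\x;H)$ and $K(\x;E)$. The key step is to construct cycles in $K_i(\x;H)$ and $K_{n-i}(\x;H)$ realizing maximum-degree classes of $\tor^S_i(H,k)$ and $\tor^S_{n-i}(H,k)$, and then to show, via the duality pairing $\tor^S_i(H,k)\otimes\tor^S_{n-i}(E,k)\to\tor^S_n(E,k)$ coming from local duality, that their Koszul product is a nonzero top class pairing nontrivially with the appropriate socle element of $H$. Since local duality makes this pairing nondegenerate in top degree, the shift bound $t_0(E)\ge t_i(E)+t_{n-i}(E)$ follows.

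I expect the main obstacle to be the last step: verifying that the pairing coming from local duality is indeed compatible with the DG multiplication on Koszul homology, and that the product of two maximum-degree classes lands in the correct component. The Artinian hypothesis (equivalently, $\pd H = n$) is essential here, since it guarantees that $E$ is well-behaved enough for the local-duality pairing to be perfect in top degree and for the Koszul-level construction to produce a class with the predicted minimum internal degree.
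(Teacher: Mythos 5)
This theorem is not proved in the paper; it is quoted verbatim as \cite[Corollary 4.1]{EHU}, so there is no ``paper's own proof'' to compare against. Eisenbud--Huneke--Ulrich's actual argument proceeds through their general regularity bound for $\tor$ modules (proved via local cohomology of a tensor product of resolutions), which is an entirely different strategy from the Koszul--homology--pairing approach you sketch. Your outline is an interesting attempt at a self-contained proof, but it has at least two genuine gaps.

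First, the reduction from $S/I$ to $H = H^0_{\mm}(S/I)$ is not justified. From the sequence $0 \to H \to S/I \to Q \to 0$ with $\pd(Q) \le n-1$ one does get $\tor_n(H) \cong \tor_n(S/I)$ and hence $T_n(H) = T_n(S/I)$, and also $T_{n-1}(H) \le T_{n-1}(S/I)$. But for $1 \le i \le n-2$ the long exact sequence only gives $T_i(H) \le \max\{T_i(S/I),\ T_{i+1}(Q)\}$ and $T_{i+1}(Q) \le \max\{T_{i+1}(S/I),\ T_i(H)\}$, which is circular and does not yield $T_i(H) \le T_i(S/I)$. Indeed already $T_0(H) > T_0(S/I)$ can happen (take $I=(x^2,xy)$ in $k[x,y]$, so $H \cong k(-1)$ while $T_0(S/I)=0$), so the inequality cannot be taken for granted at intermediate indices either. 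Without $T_i(H) \le T_i(S/I)$ for all $1 \le i \le n-1$, proving the statement for $H$ does not prove it for $S/I$.

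Second, the pairing $\tor_i^S(H,k)\otimes\tor_{n-i}^S(E,k)\to\tor_n^S(E,k)$ does not come for free: it requires an $S$-bilinear map $H\otimes E \to E$, and there is no natural such map. The structure that does exist is the following: for a $k$-algebra $A$ that is a quotient of $S$, the Koszul homology $\tor^S_\bullet(A,k)$ is a graded $k$-algebra, and for any $A$-module $N$, $\tor^S_\bullet(N,k)$ is a graded module over it. One could try to route through $A = S/I$ (using that $H$ is an ideal of $A$, so the inclusion $H\hookrightarrow A$ gives a map on Koszul homology), but then the pairing lands you with $T_{n-i}(H)$ or $T_{n-i}(E)$ on the right-hand side rather than $T_{n-i}(S/I)$, and the nondegeneracy you need in top degree is precisely the Avramov--Golod Poincar\'e-duality phenomenon, which is special to Gorenstein Artinian $A$ and would need a separate argument here. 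So the key step you flag as the ``main obstacle'' is in fact where the proof currently breaks down, rather than a routine verification.
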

\noindent Note that by replacing $n$ with $p:=\pd(S/I)$, the statement extends to ideals $I$ such that $\dim(S/I) - \depth(S/I) \le 1$, since we may always assume $k$ is infinite and then reduce both $S$ and $S/I$ by a sequence of linear forms regular on $S/I$ leaving us in the case when $\pd(S/I) = n$.  In particular, $T_p(S/I) \le T_i(S/I) + T_{p-i}(S/I)$ for all $0 \le i \le p = \pd(S/I)$ when $S/I$ is Cohen-Macaulay.  Previously the author asked \cite[Question 5.1]{M} whether the same weak convexity inequality held without the assumptions on $\depth(S/I)$ or $\dim(S/I)$.  The author knows of no counterexamples to this statement.  Note also that such a weak convexity inequality would imply a linear bound on $T_p(S/I)$ for \emph{any} ideal in terms of $T_1(S/I),\ldots,T_h(S/I)$, where $h := \left\lceil \frac{p}{2} \right\rceil$.  In \cite[Theorem 4.7]{M} the author proved a weaker polynomial bound on $\reg(S/I)$, and in particular on $T_p(S/I)$, in terms of $T_1(S/I),\ldots,T_h(S/I)$ without any restriction on $\depth(S/I)$ or $\dim(S/I)$ by means of the Boij-S\"oderberg decomposition of the Betti table of $S/I$.

Instead of discussing cyclic modules, one can more generally consider modules whose annihilator contains some fixed ideal.  In the same paper as above, Eisenbud-Huneke-Ulrich proved the following:

\begin{thm}[Eisenbud-Huneke-Ulrich {\cite[Corollary 4.2]{EHU}}]\label{EHUthm2} Let $M$ be a finitely-generated graded $S$-module of codimension $c$, projective dimension $p$, and with $\dim(M) - \depth(M) \le 1$.  Let $J$ be a homogeneous ideal contained in $\ann(M)$.  If $\depth(S/J) \ge \depth(M)$, then for $0 \le q \le \mathrm{codim}(J)$, 
\[T_{p}(M) \le T_{p-q}(M) + T_q(S/J).\]  
In particular:
\begin{enumerate}
\item[(a)] If $\ann(M)$ contains a regular sequence of degrees $d_1,\ldots,d_q$, then
\[T_{p}(M) \le T_{p - q}(M) + \sum_{i = 1}^q d_i.\]
\item [(b)] If $J$ is generated in degree $d$ with linear resolution, then
\[T_{p}(M) \le T_{p - q}(M) + d + q - 1.\]
\end{enumerate}
\end{thm}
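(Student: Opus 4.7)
The strategy is to first reduce to the case $\depth(M)=0$ via a linear-form argument, and then adapt the duality step behind the cyclic case (Theorem~\ref{EHUthm1}) to the module setting.

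\emph{Step 1 (depth reduction).} Since graded Betti numbers are preserved by flat base change, we may assume $k$ is infinite. Let $d=\depth(M)$. Because $\depth(S/J)\ge d$, the maximal ideal is associated to neither $M$ nor $S/J$ when $d\ge 1$, so prime avoidance in $S_1$ produces linear forms $\ell_1,\dots,\ell_d$ forming a common regular sequence on $M$ and on $S/J$. Passing to $\bar S=S/(\ell_1,\dots,\ell_d)$, $\bar M=M/(\ell_1,\dots,\ell_d)M$, and $\bar J=J\bar S$ preserves all of $T_i(M)$, $T_j(S/J)$, $p=\pd(M)$, $\codim(J)$, and the hypothesis $\dim-\depth\le 1$; meanwhile $\depth(\bar M)=0$ and $\dim(\bar M)\le 1$. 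We may therefore assume from the outset that $\depth(M)=0$, $\dim(M)\le 1$, and $p=n$.

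\emph{Step 2 (main inequality).} Let $\Omega=\Omega_{p-q}(M)$ denote the $(p-q)$-th syzygy in a minimal resolution $F_\bullet\to M$; since the minimal resolution of $\Omega$ is a shift of $F_\bullet$, we have $T_0(\Omega)=T_{p-q}(M)$ and $T_q(\Omega)=T_p(M)$, so the desired inequality becomes
\[T_q(\Omega)\le T_0(\Omega)+T_q(S/J).\]
To establish it we mimic the duality argument behind Theorem~\ref{EHUthm1}. Under the near-Cohen-Macaulay hypothesis only $H^0_\mm(M)$ and $H^1_\mm(M)$ are nonzero, so by graded local duality $T_p(M)$ is controlled by a ``top socle-type degree'' attached to $M$. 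The ideal $J$ enters through the observation that each $f\in J$ acts nullhomotopically on $F_\bullet$; these nullhomotopies assemble into comparison maps from the minimal resolution $G_\bullet\to S/J$ that raise homological degree by $q$ while shifting graded degree by at most $T_q(S/J)$. Tracking such a map on socle-level generators yields $T_p(M)-T_{p-q}(M)\le T_q(S/J)$.

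\emph{Step 3 (corollaries).} Part (a) is the case $J=(f_1,\dots,f_q)\subseteq\ann(M)$ with $f_i$ a regular sequence of degrees $d_i$: the Koszul complex gives $T_q(S/J)=\sum d_i$, while $\depth(S/J)=n-q\ge\depth(M)$ follows from $q\le c\le p$. Part (b) is the case of $J$ with $d$-linear resolution, where $T_q(S/J)=d+q-1$. Both follow by direct substitution into the main inequality.

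The main obstacle is Step 2. Transferring the cyclic-case duality argument to a non-cyclic module requires tracking precisely how elements of $J$ act homotopically on $F_\bullet$, since the syzygies $\Omega_{p-q}(M)$ need not themselves be annihilated by $J$. The almost-Cohen-Macaulay hypothesis $\dim(M)-\depth(M)\le 1$ is essential here, as it tightly constrains the socle structure of $M$ so that a single dualization produces the sharp bound.
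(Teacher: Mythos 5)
This theorem is stated in the paper as a citation to Eisenbud--Huneke--Ulrich (Corollary 4.2 of their paper on the regularity of Tor); no proof is supplied in the paper, so there is no internal argument to compare against. Judged on its own merits, your proposal is not a proof: Steps 1 and 3 are sound, but Step 2 is an outline of a hoped-for argument with a genuine gap that you yourself name but do not close.

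Concretely, the reduction in Step 2 replaces the problem for $M$ by the inequality $T_q(\Omega)\le T_0(\Omega)+T_q(S/J)$ for the syzygy module $\Omega=\Omega_{p-q}(M)$. But $J$ does not annihilate $\Omega$ (only $M$), so the hypothesis that makes the cyclic-type duality argument run --- namely $J\subseteq\ann(\cdot)$ acting on the top Ext or on socle generators --- simply fails for $\Omega$, and the reduction buys nothing. The remainder of Step 2 (``nullhomotopies assemble into comparison maps from $G_\bullet\to S/J$ that raise homological degree by $q$ while shifting graded degree by at most $T_q(S/J)$,'' then ``tracking such a map on socle-level generators yields the bound'') is precisely the content one would have to prove; as written it is an assertion, not an argument. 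Note also that the almost-Cohen-Macaulay hypothesis $\dim M-\depth M\le 1$ allows both $H^0_\mm(M)$ and $H^1_\mm(M)$ to be nonzero after the depth reduction, so a clean ``socle degree'' characterization of $T_p(M)$ is already not available in general, and the one-line local-duality appeal does not handle this case. For what it is worth, the actual proof in EHU does not go through syzygy modules at all: it is a corollary of their theorems on the regularity of $\tor^S_i(M,N)$ under the dimension hypothesis on $\tor_1$, obtained by comparing the two resolutions of $M$ and $S/J$ and controlling the resulting spectral sequence. If you want to complete a proof along your lines, you would need to replace Step 2 with that Tor-regularity machinery (or reproduce it), rather than the syzygy reduction and the unproved homotopy-tracking claim.
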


We conjecture that Theorem~\ref{EHUthm2} also holds without the almost Cohen-Macaulay hypothesis.

\begin{conj}\label{mine} Let $M$ be a finitely-generated graded $S$-module of codimension $c$.  Let $J$ be a homogeneous ideal contained in $\ann(M)$ and set $p = \pd(M)$.  If $\depth(S/J) \ge \depth(M)$, then for $0 \le q \le \mathrm{codim}(J)$, 
\[T_{p}(M) \le T_{p-q}(M) + T_{q}(S/J).\]  
\end{conj}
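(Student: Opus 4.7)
The plan is to adapt the change of rings spectral sequence that underlies Theorem~\ref{EHUthm2}. Setting $R = S/J$, the hypothesis $J \subseteq \ann(M)$ makes $M$ a graded $R$-module, so we have
\[
E_2^{a,b} = \tor^R_a\!\bigl(M,\tor^S_b(R,k)\bigr) \Longrightarrow \tor^S_{a+b}(M,k).
\]
Since $\tor^S_b(R,k)$ is annihilated by $\mm$, it is a graded $k$-vector space concentrated in degrees $\le T_b(S/J)$, so as a graded $R$-module it splits into a sum of shifted copies of $k$ and yields $T(E_2^{a,b}) \le T^R_a(M) + T_b(S/J)$, where $T^R_\bullet$ denotes maximal shifts over $R$. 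Passing to $E_\infty$ gives
\[
T_p(M) \le \max_{a+b=p}\bigl\{T^R_a(M) + T_b(S/J)\bigr\}.
\]

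From here the task is to fix $q$ with $0 \le q \le \codim(J)$ and show the maximum on the right is realized or bounded by the pair $(a,b) = (p-q, q)$. One needs the comparison $T^R_{p-q}(M) \le T_{p-q}(M)$, which should follow from relating the minimal $R$-free resolution of $M$ to its minimal $S$-free resolution via the Koszul-type spectral sequence in the other direction, and a matching argument that for $b \neq q$, the contribution $T^R_{p-b}(M) + T_b(S/J)$ is dominated by $T_{p-q}(M) + T_q(S/J)$. The depth hypothesis $\depth(S/J) \ge \depth(M)$ should enter here via Auslander--Buchsbaum over $R$: when $\pd_R(M) < \infty$, one gets $\pd_R(M) = \depth(R) - \depth(M)$, which together with $\pd_S(M) = n - \depth(M)$ and $\pd_S(R) = n - \depth(R)$ constrains which pairs $(a,b)$ with $a+b=p$ actually contribute.

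The main obstacle, and presumably the reason this is formulated as a conjecture, is that $R$ is typically not regular, so $\pd_R(M)$ may be infinite; the spectral sequence then has contributions at arbitrary $a$ and the clean bound on the range is lost. One workaround worth pursuing is induction on the depth defect $\dim(M) - \depth(M)$ with Theorem~\ref{EHUthm2} as the base case: use the short exact sequence $0 \to \H^0_{\mm}(M) \to M \to \bar{M} \to 0$ to raise depth by one, then reduce modulo a general linear form regular on $\bar{M}$ to descend in dimension, all while checking that the asymmetric hypothesis $\depth(S/J) \ge \depth(M)$ is preserved (this is the delicate point, as quotienting by a linear form drops $\depth(S/J)$ by $1$ only if it is regular on $S/J$, which need not hold). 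Alternatively, one can attempt a Boij--S\"oderberg decomposition of $\beta(M)$ into pure diagrams and verify the inequality summand by summand; the obstruction there is that $T_{p-q}(M) + T_q(S/J)$ couples two distinct tables and does not decompose linearly.

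The hardest single step is arranging any of these reductions so the output still satisfies the precise asymmetric depth condition $\depth(S/J) \ge \depth(M)$ without silently strengthening it to the almost Cohen--Macaulay regime where Theorem~\ref{EHUthm2} already applies; without new input (e.g., a refined filtration of $M$ by CM approximations whose annihilators still contain $J$), one seems stuck between the two ends of the inductive chain.
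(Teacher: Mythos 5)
The statement you were asked to prove is stated in the paper as \emph{Conjecture}~\ref{mine}, not as a theorem; the paper does not prove it, and indeed explicitly says so. What the paper actually establishes are two special cases and a weaker substitute: Theorem~\ref{codim1} proves the $q=1$ case (for a single element $f \in \ann(M)$, without any depth hypothesis) by a direct dual-complex argument on $\ext^p_S(M,S)$, and Theorem~\ref{regthm} proves a weaker linear bound on $\reg(M)$ (hence on $T_p(M)$, via Corollary~\ref{main}) under the additional hypothesis that $S/J$ is Cohen-Macaulay with $\codim(J) = \pd(S/J)$, by inducting on $\pd(M)$ using the syzygy short exact sequence $0 \to K \to F \to M \to 0$ over $R = S/J$ and invoking the Eisenbud--Huneke--Ulrich result as the base case. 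Neither of these is the full conjecture, and the weaker bound in Theorem~\ref{regthm} involves the extra correction term $\sum_j a_j T_j(S/J)$ precisely because the clean inequality of Conjecture~\ref{mine} is not available.

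Your proposal is essentially a candid survey of obstructions rather than a proof, and to your credit you do not claim otherwise. The change-of-rings spectral sequence $E_2^{a,b} = \tor^R_a(M,\tor^S_b(R,k)) \Rightarrow \tor^S_{a+b}(M,k)$ is the right starting point and does give $T_p(M) \le \max_{a+b=p}\{T^R_a(M) + T_b(S/J)\}$, but you correctly flag that when $R$ is not regular $\pd_R(M)$ is usually infinite and the needed comparison $T^R_{p-q}(M) \le T_{p-q}(M)$ along with the restriction to $b = q$ does not follow. You also correctly flag the central difficulty with the induction on depth defect: cutting by a general linear form can destroy the asymmetric hypothesis $\depth(S/J) \ge \depth(M)$, and the Boij--S\"oderberg route fails because $T_{p-q}(M) + T_q(S/J)$ couples two distinct Betti tables and is not linear in either. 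These are exactly the reasons the paper leaves the statement as a conjecture. If you want to make progress on what \emph{is} provable, the move the paper makes that you did not try is to pass to the first syzygy module $K$ over $R = S/J$ and exploit that the $R$-free cover $F$ has $\pd_S(F) = \pd_S(S/J) = c$ with $T_i(F) = T_0(M) + T_i(S/J)$; this sidesteps the need for bounds on $T^R_a(M)$ entirely, at the cost of producing a weaker inequality with extra correction terms and requiring $S/J$ Cohen-Macaulay.
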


Note that if Conjecture~\ref{mine} is true, then for any homogeneous ideal $I$ with $\pd(S/I) \le 2 \,{\mathrm{codim}}(I)$, Question 5.1 in \cite{M} has an affirmative answer.  

The purpose of this note is to give evidence in support of this conjecture when $S/J$ is Cohen-Macaulay.   Note that if true, the conjecture implies various linear bounds on $T_p(M)$ in terms of $T_0(M),T_1(M),\ldots,T_{p-c}(M)$ and $T_1(S/J),\ldots,T_c(S/J)$.  

As the main result of this note, we prove in Theorem~\ref{main} a weaker linear bound on $\reg(M)$, and thus also $T_p(M)$, in terms of the same data without restriction on $\depth(M)$ or $\dim(M)$.  In particular, when $M = S/I$ is a cyclic module, we get the following regularity bound in terms of only the first $\pd(S/I) - \codim(I)$ syzygy degrees. %By similar methods we can prove a linear bound on the regularity of an ideal $I$ in terms of the first $\pd(S/I) - \codim(I)$ syzygy degrees.  

\begin{introcor} Let $I$ be an ideal of $S$ with $p = \pd(S/I)$ and $c = \codim(I)$.  Then
\[\reg(S/I) \le \max_{\substack{0 \le i \le p-c }}\left\{T_i(S/I) + (p-i)\, T_1(S/I) \right\} - p.  \] 
\end{introcor}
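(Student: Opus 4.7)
The corollary should follow from Theorem~\ref{main} applied to $M = S/I$ with a carefully chosen ideal $J \subseteq \ann(S/I) = I$ for which the right-hand side of Theorem~\ref{main} collapses to the stated expression. The Cohen-Macaulay hypothesis on $S/J$ makes a complete intersection the natural choice, and one wants its generators in the smallest possible degree, namely $T_1(S/I)$.

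Concretely, I would first replace $k$ by an infinite extension---this leaves $\pd$, $\codim$, $\reg$, and every $T_i(S/I)$ unchanged---and set $d = T_1(S/I)$. Next I build a regular sequence $f_1,\ldots,f_c$ in $I_d$ by prime avoidance: having chosen $f_1,\ldots,f_{i-1}$, the height of $(f_1,\ldots,f_{i-1})$ is $i-1 < c = \h(I)$, so no minimal (hence no associated) prime of $(f_1,\ldots,f_{i-1})$ contains $I$; since each such prime meets the $k$-vector space $I_d$ in a proper subspace and there are only finitely many of them, a general element of $I_d$ avoids all of them and can serve as $f_i$. Set $J = (f_1,\ldots,f_c)$. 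Then $S/J$ is a complete intersection, hence Cohen-Macaulay of codimension $c$, with $T_1(S/J) = d = T_1(S/I)$ and $\depth(S/J) = n - c$.

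The Auslander-Buchsbaum formula gives $\depth(S/I) = n - p \le n - c = \depth(S/J)$, so the depth hypothesis of Theorem~\ref{main} holds. I expect Theorem~\ref{main} to yield a bound essentially of the form
\[\reg(S/I) \le \max_{0 \le i \le p-c}\bigl\{T_i(S/I) + (p-i)\, T_1(S/J)\bigr\} - p,\]
after which substituting $T_1(S/J) \le T_1(S/I)$ gives the corollary. If the statement of Theorem~\ref{main} instead uses data such as $T_c(S/J)$ or $\reg(S/J)$, these simplify equally cleanly for a complete intersection generated in degree $d$, since $T_q(S/J) = qd$ from the Koszul complex and $\reg(S/J) = c(d-1)$.

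The real content of the paper is in Theorem~\ref{main}; the corollary is a matter of picking the right $J$. The only mild technicality here is the prime avoidance argument producing a regular sequence all of whose elements have degree at most $T_1(S/I)$, and this is standard once one is working over an infinite field.
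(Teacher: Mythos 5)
Your proposal is exactly the paper's route: choose $J = (f_1,\ldots,f_c) \subset I$ a regular sequence of forms of degree $d = T_1(S/I)$ (available by prime avoidance over an infinite field, as you argue), apply the main regularity bound (Theorem~\ref{regthm}) with $T_j(S/J) = jd$ coming from the Koszul complex, and simplify using $\sum_j a_j j \le |\a| \le p-c-i$. The paper packages this as an intermediate corollary for a regular sequence of common degree $d$ and then specializes $d = T_1(S/I)$, but the argument is the same as the one you sketch.
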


By adapting an argument of Herzog-Srinivasas \cite{HS}, we also prove in Theorem~\ref{codim1} the special case of Conjecture~\ref{mine} when $J$ is principal.  This recovers and strengthens results of Herzog-Srinivasan \cite{HS} and El Khoury-Srinivasan \cite{ES} as well as the author \cite{M}.  

In the next section we gather some preliminary results.  Section~\ref{mainresults} contains the main results and an example.

\section{Preliminary Results}\label{prelim}

We begin with some elementary facts about the maximal graded shifts in short exact sequences.

\begin{lem}\label{tiprop} Let $0 \to M' \to M \to M'' \to 0$ be a short exact sequence of graded $S$-modules.  Then for $i \ge 0$,
\begin{enumerate}
\item $T_i(M) \le \max\left\{T_i(M'), T_{i}(M'')\right\}$,
\item $T_i(M') \le \max\left\{T_i(M), T_{i+1}(M'')\right\}$,
\item $T_i(M'') \le \max\left\{T_i(M), T_{i-1}(M')\right\}$.
\end{enumerate}
\end{lem}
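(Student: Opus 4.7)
The three inequalities are all exactness statements at one spot of the long exact Tor sequence
\[ \cdots \to \tor_{i+1}^S(M'',k) \to \tor_i^S(M',k) \to \tor_i^S(M,k) \to \tor_i^S(M'',k) \to \tor_{i-1}^S(M',k) \to \cdots \]
associated to the given short exact sequence. Since $S$ and $k$ are graded and all the maps in the short exact sequence are degree-preserving, this long exact sequence is in fact a long exact sequence of graded $k$-vector spaces; I will work one internal degree $j$ at a time.

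The plan is to pick an integer $j$ with $j > \max\{T_i(M'),T_i(M'')\}$ (respectively the right-hand side of (2) or (3)) and show that the relevant Tor vanishes in degree $j$, which forces $T_i$ of the relevant module to be at most that maximum. Concretely, for (1) I would use exactness at $\tor_i^S(M,k)_j$: it is squeezed between $\tor_i^S(M',k)_j$ and $\tor_i^S(M'',k)_j$, both of which vanish when $j$ exceeds both $T_i(M')$ and $T_i(M'')$, so $\tor_i^S(M,k)_j=0$. For (2), exactness at $\tor_i^S(M',k)_j$ squeezes it between $\tor_{i+1}^S(M'',k)_j$ and $\tor_i^S(M,k)_j$; assuming $j$ exceeds both $T_{i+1}(M'')$ and $T_i(M)$ makes both of these zero, hence $\tor_i^S(M',k)_j=0$. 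For (3), exactness at $\tor_i^S(M'',k)_j$ squeezes it between $\tor_i^S(M,k)_j$ and $\tor_{i-1}^S(M',k)_j$, and the same argument applies. In each case, taking $j$ one larger than the stated maximum proves the inequality.

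The only real subtlety is edge behavior: in part (3) with $i=0$, the term $\tor_{-1}^S(M',k)$ does not appear, so the inequality reduces to $T_0(M'') \le T_0(M)$, which follows directly from the surjection $M \onto M''$; I would remark on this in one line. No serious obstacle arises, since the proof is a routine diagram chase made possible entirely by the gradedness of the long exact Tor sequence.
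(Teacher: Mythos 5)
Your proof is correct and is exactly the paper's approach: the paper simply cites the long exact sequence of $\tor$ obtained by tensoring with $k$, and you have filled in the degree-by-degree diagram chase. The edge-case remark for $i=0$ in (3) is a nice touch but not strictly necessary, since $\tor_{-1}$ vanishes and the same squeeze applies.
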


\begin{proof} All of the statements follow from the long exact sequence of $\tor$ stemming from tensoring the above short exact sequence with $k$.
\end{proof}

The following statement about the early maximal graded shifts seems to be well-known but we include a proof for completeness.  A similar result is proved in \cite[Lemma 1.2]{GL}.

\begin{prop}\label{Pinc} If $M$ be a finitely-generated graded $S$-module of codimension $c$, then $T_i(M) < T_{i+1}(M)$ for $0 \le i < c$.
\end{prop}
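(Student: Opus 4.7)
The strategy is to induct on $i$, leveraging the classical vanishing $\ext_S^i(M,S) = 0$ for $i < c$, which holds because $S$ is Cohen-Macaulay and hence $\grade M = \codim M = c$. Fix $i$ with $0 \le i < c$ and assume inductively that $T_j(M) < T_{j+1}(M)$ for all $j < i$. Let $F_\* \to M$ be the minimal graded free resolution with differentials $\partial_\*$, write $D := T_i(M)$ and $r := \beta_{i,D}(M) \ge 1$, and decompose $F_i = F_i^D \oplus F_i^{<D}$, where $F_i^D \cong S(-D)^r$ is the part of $F_i$ in degree $D$ and $F_i^{<D}$ is the direct sum of the remaining summands (all in degrees strictly less than $D$).

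Suppose for contradiction that $T_{i+1}(M) \le D$. Any basis element of $F_{i+1}$ has degree $d' \le D$, and the entries of $\partial_{i+1}$ lie in $\mm$ by minimality; hence $\partial_{i+1}$ applied to such a basis element can have nonzero components only on summands of $F_i$ of degree at most $d' - 1 \le D - 1$, never on $F_i^D$. Thus $\im(\partial_{i+1}) \subseteq F_i^{<D}$. Dualizing, $F_i^{D*} \subseteq \ker(\partial_{i+1}^*)$, and then the Ext vanishing $\ker(\partial_{i+1}^*)/\im(\partial_i^*) = \ext_S^i(M,S) = 0$ yields $F_i^{D*} \subseteq \im(\partial_i^*)$.

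The contradiction now arises from the graded strand in internal degree $-D$. On one side $(F_i^{D*})_{-D} = (S(D)^r)_{-D} = k^r \ne 0$; on the other side, the inductive hypothesis forces every summand of $F_{i-1}$ to have degree strictly less than $D$ (with $F_{-1}=0$ in the base case $i=0$), so $(F_{i-1}^*)_{-D} = 0$ and consequently $(\im(\partial_i^*))_{-D} = 0$. This forces $T_i(M) < T_{i+1}(M)$ and completes the induction. The only delicate point is tracking the graded shifts under the dualization $S(-D) \mapsto S(D)$; the rest is a formal consequence of minimality and Ext vanishing.
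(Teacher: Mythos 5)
Your proof is correct and rests on the same pillar as the paper's: the vanishing $\ext_S^i(M,S)=0$ for $i < c = \grade(M)$, combined with the degree analysis of the minimal resolution and its dual. The one place you diverge is in how you extract the final contradiction from $F_i^{D*}\subseteq \im(\partial_i^*)$. You set up an induction on $i$ so that $T_{i-1}(M) < D$ forces $(F_{i-1}^*)_{-D}=0$, hence $(\im\partial_i^*)_{-D}=0$. This works, but the induction is unnecessary: minimality already gives $\im(\partial_i^*)\subseteq \m F_i^*$, and since every twist appearing in $F_i$ is at most $D$, one has $(\m F_i^*)_{-D}=0$ directly, so the image in degree $-D$ vanishes without any knowledge of $F_{i-1}$. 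Equivalently (and this is what the paper does), the dual $\alpha^*$ of a degree-$D$ basis element of $F_i$ is itself a basis element of $F_i^*$, hence lies outside $\m F_i^*\supseteq \im(\partial_i^*)$ while lying inside $\ker(\partial_{i+1}^*)$, giving a nonzero class in $\ext^i_S(M,S)$ immediately and with no induction. So the content is the same; the paper's version is just slightly tighter because it reads the contradiction off minimality alone rather than carrying an inductive bound on $T_{i-1}(M)$.
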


\begin{proof} Suppose to the contrary that $T_i(M) \ge T_{i+1}(M)$ with $0 \le i < c$.  Let $F_\*$ be the minimal graded free resolution of $M$ with differential maps $\partial_i:F_i \to F_{i-1}$.  Let $\alpha \in F_i - \m F_i$ be a homogeneous element of degree $T_i(M)$.  Since $F_\*$ is minimal and since $T_i(M) \ge T_{i+1}(M)$, after possibly a change of basis, the matrix representing $\partial_{i+1}$ has a row of zeros.  Now consider the dual $F^*_\* := \hom_S(F_\*,S)$, whose maps are represented by the transposes of the matrices representing the $\partial_i$;  we denote them by $\partial_i^*$.  Hence the matrix representing $\partial^*_{i+1}:F^*_i \to F^*_{i+1}$ has a column of zeros.  In other words, there is a basis element $\alpha^*$ (corresponding to the dual of $\alpha$) of $F^*_i$ which is in the kernel of $\partial^*_{i}$.  As $F_\ast$ was minimal, all entries of the matrix $\partial_{i+1}$, and hence of $\partial^*_{i+1}$, are contained in the maximal ideal.  Therefore $\alpha^*$ is a nonzero element in $\ker(\partial_{i}^*)/\im(\partial_{i+1}^*) = \ext^i_S(M,S)$ and in particular $\ext^i_S(M,S) \neq 0$.  This is impossible since $i < \mathrm{grade}(M) = \mathrm{codim}(M)$.
\end{proof}

It is also possible to prove the above result using the Boij-S\"oderberg decomposition  of the Betti table of $M$.  (See \cite{EisenbudSchreyer} and \cite{BS}.)   This decomposition involves multiples of Betti diagrams of pure Cohen-Macaulay $S$-modules of codimension at least $\mathrm{codim}(M)$.  The associated degree sequences are strictly increasing thus forcing the inequality in the above proposition.

\begin{rmk} We note that in general it is possible that $t_i(M) \ge t_{i+1}(M)$ for $i \ge \mathrm{codim}(M)$, even if $M$ is cyclic.  A Macaulay2 \cite{Mac2} computation shows that if 
\[S = K[x_1,x_2,y_1,y_2,z_1,z_2,z_3,z_4]\]
 and 
\[I = \left(x_1^6,y_1^6,x_1^2x_2^4+y_1^2y_2^4+x_1y_1(x_2^3z_1+x_2^2y_2z_2+x_2y_2^2z_3+y_2^3z_4)\right),\]
 then $T_7(S/I) = 38$ while  $T_8(S/I) = 34$.  Note that $\mathrm{codim}(S/I) = 2$.
 
 One can also create more extreme examples by taking direct sums of cyclic modules of different codimensions.  For example, if $S = K[x,y,z]$ and $M = S/(x^n,y^n) \oplus S/(x,y,z)$, then $\mathrm{codim}(M) = 2$, $T_2(M) = 2n$ and $T_3(M) = 3$.  
\end{rmk}

An easy consequence of Theorem~\ref{EHUthm2} and Proposition~\ref{Pinc} is the following statement regarding the regularity of Cohen-Macaulay modules.

\begin{cor}\label{EHUcor} If $M$ is a Cohen-Macaulay module of codimension $c$ and $\ann(M)$ contains a homogeneous Cohen-Macaulay ideal $J$ also of codimension $c$, then 
\[\reg(M) \le T_0(M) + T_c(S/J) - c.\]
In particular, if $\ann(M)$ contains a regular sequence of degrees $d_1,\ldots,d_c$, then 
\[\reg(M) \le T_0(M) + \left(\sum_{j = 1}^c d_j\right) - c.\]
\end{cor}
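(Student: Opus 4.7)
The plan is to reduce the bound on $\reg(M)$ to a bound on $T_c(M)$ via Proposition~\ref{Pinc}, and then to bound $T_c(M)$ via Theorem~\ref{EHUthm2}.

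First I would note that since $M$ is Cohen--Macaulay of codimension $c$, the Auslander--Buchsbaum formula gives $\pd(M)=c$, so
\[\reg(M) = \max_{0\le i \le c}\{T_i(M) - i\}.\]
Next I would verify the hypotheses of Theorem~\ref{EHUthm2}: both $M$ and $S/J$ are Cohen--Macaulay of codimension $c$, so $\depth(S/J) = n - c = \depth(M)$, and $\dim(M)-\depth(M)=0\le 1$. Applying Theorem~\ref{EHUthm2} with $p=c$ and $q=c$ yields
\[T_c(M) \le T_0(M) + T_c(S/J).\]

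Now I would use Proposition~\ref{Pinc} applied to $M$: the strict chain $T_0(M) < T_1(M) < \cdots < T_c(M)$ gives $T_i(M) \le T_c(M) - (c-i)$ for $0 \le i \le c$. Subtracting $i$ produces $T_i(M) - i \le T_c(M) - c$, so combining with the previous display,
\[T_i(M) - i \le T_0(M) + T_c(S/J) - c\quad\text{for every } 1\le i\le c.\]
For the outlier $i = 0$, I would apply Proposition~\ref{Pinc} to the cyclic module $S/J$: since $T_0(S/J)=0$, the strictly increasing chain forces $T_c(S/J) \ge c$, and therefore $T_0(M) \le T_0(M) + T_c(S/J) - c$ as well. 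Taking the maximum over $0\le i \le c$ gives the stated regularity bound.

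For the ``in particular'' clause, I would take $J=(f_1,\ldots,f_c)$ with $\deg f_j = d_j$. Since $f_1,\ldots,f_c$ is a regular sequence, $S/J$ is a complete intersection, hence Cohen--Macaulay of codimension $c$, and its minimal free resolution is the Koszul complex, whose final free module is $S(-\sum d_j)$. Thus $T_c(S/J) = \sum_{j=1}^c d_j$, and substituting into the general bound yields the desired inequality.

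I do not anticipate a real obstacle here: the argument is essentially an assembly of Theorem~\ref{EHUthm2} and Proposition~\ref{Pinc}. The only subtle point is to remember to handle $i=0$ separately, which is where the Boij--S\"oderberg-type lower bound $T_c(S/J) \ge c$ from Proposition~\ref{Pinc} is needed.
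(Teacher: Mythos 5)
Your argument is correct and follows the paper's route exactly: Proposition~\ref{Pinc} reduces $\reg(M)$ to $T_c(M) - c$, and Theorem~\ref{EHUthm2} with $q = c$ (whose hypotheses you verify correctly) bounds $T_c(M)$ by $T_0(M) + T_c(S/J)$. The only stylistic note is that your separate treatment of the ``outlier'' $i = 0$ is redundant, since the strict chain $T_0(M) < T_1(M) < \cdots < T_c(M)$ already yields $T_0(M) - 0 \le T_c(M) - c$ along with the rest.
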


\begin{proof} By Proposition~\ref{Pinc}, we have $\reg(M) = T_c(M) - c$.  Since $\pd(S/J) = \pd(M) = c$, the rest follows from Theorem~\ref{EHUthm2}.
\end{proof}

%A very similar statement follows from Theorem~\ref{EHUthm1} in the almost Cohen-Macaulay case.  We first prove a lemma that will be useful again later.

%\begin{lem}\label{key} Suppose $M$ is a finitely generated module with projective dimension $p := \pd(M)$.  If $\ann(M)$ contains a homogeneous Cohen-Macaulay ideal $J$ of codimension $c$, then consider the short exact sequence

%\end{lem}

%\begin{cor}\label{EHUcor2} Suppose $M$ is a finitely generated module of codimension $c$ and  projective dimension $p := \pd(M) = c+1$.  If $\ann(M)$ contains a homogeneous Cohen-Macaulay ideal $J$ also of codimension $c$, then 
%\[\reg(M) \le T_1(M) + T_c(S/J) - p.\]
%\begin{proof}
%Since $T_0(M) < \cdots < T_c(M)$, we have
%\[\reg(M) = \max\{T_{c}(M) - c, T_{c+1}(M) - (c+1)\}.\]
%Since $\pd(M) - \codim(M) = \dim(M) - \depth(M) \le 1$, then by Theorem~\ref{EHUthm1}
%\[T_{c+1}(M) \le T_1(M) + T_c(S/J).\]

%\end{proof}

%\end{cor}

So if $M$ is Cohen-Macaulay and $\ann(M)$ contains a regular sequence of length $\codim(M)$, then one needs only the maximal degree of a minimal generator (i.e. the $0$th step in the free resolution of $M$) to get an effective bound on $\reg(M)$.  In particular, the previous corollary shows that among Cohen-Macaulay ideals of fixed codimension and maximal generating degree, complete intersections have the maximum regularity.  The goal of this paper is to show that for arbitrary modules $M$, one need only compute the first $\pd(M) - \mathrm{codim}(M)$ steps of the resolution of $M$ to get an effective bound on the regularity of $M$.

%We conjecture that Theorem~\ref{EHUthm} and Corollary~\ref{EHUcor} hold without the almost Cohen-Macaulay hypothesis in the following sense:

%\begin{conj}\label{EHUconj} If $M$ is a finitely-generated $S$-module with of projective dimension $p$ and $\ann(M)$ contains a homogeneous ideal $J$ with $\depth(M) \le \depth(S/J)$ then for $0 \le q \le \mathrm{codim}(S/J)$,
%\[t_p(M) \le t_{p - q}(M) + t_q(S/J).\]
%  In particular, if $\ann(M)$ contains a regular sequence of degrees $d_1,\ldots,d_q$, then 
%\[t_p(M) \le t_{p - q}(M) + \sum_{i = 1}^q d_j\]
%\end{conj}

%Note that this result would complement Ullery's construction, which allows for arbitrary sequences of $t_i$ at the beginning of the resolution of an ideal $I$ at the expense of the last $\mathrm{codim}(I)$ steps of the resolution.

\section{Main Results}\label{mainresults}

First we prove the previously mentioned special case of Conjecture~\ref{mine}.  The proof, which we include for completeness, is adapted from that of Herzog-Srinivasan \cite[Proposition 2]{HS} 

\begin{thm}\label{codim1} Let $M$ be a graded $S$-module of projective dimension $p$.  Suppose $f \in \ann(M)$ and set $d = \deg(f)$.  Then
\[T_p(M) \le T_{p-1}(M) + d = T_{p-1}(M) + T_1(S/(f)).\]
In particular, for any homogeneous ideal $I$ of $S$, if $p = \pd(S/I)$ then
\[T_p(S/I) \le T_{p-1}(S/I) + t_1(S/I).\]
\end{thm}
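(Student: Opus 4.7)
The plan is to use the standard fact that multiplication by an annihilator is null-homotopic on a free resolution, dualize, and then do a careful degree count.

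\smallskip

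\emph{Step 1: Null-homotopy.} Let $F_\bullet$ be the minimal graded free resolution of $M$ over $S$. Since $f \in \ann(M)$, the chain endomorphism $f \cdot \id : F_\bullet \to F_\bullet$ induces the zero map on $M$ and is therefore null-homotopic. Choose graded maps $h_i : F_i \to F_{i+1}$ of internal degree $d$ with $h_{-1} = 0$ satisfying $\partial_{i+1} h_i + h_{i-1} \partial_i = f \cdot \id_{F_i}$. Because $F_{p+1} = 0$, the identity at level $p$ reduces to
\[
h_{p-1} \circ \partial_p \;=\; f \cdot \id_{F_p}.
\]

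\emph{Step 2: Dualize.} Apply $\hom_S(-,S)$. The transposed equation reads
\[
\partial_p^{*} \circ h_{p-1}^{*} \;=\; f \cdot \id_{F_p^{*}},
\]
where $h_{p-1}^{*} : F_p^{*} \to F_{p-1}^{*}$ is a graded map of internal degree $d$.

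\emph{Step 3: Pick a top-degree generator and chase degrees.} Let $\alpha \in F_p$ be a basis element living in a summand $S(-T_p(M))$ of $F_p$, and let $\alpha^* \in F_p^{*}$ be the dual basis element, which is a homogeneous element of degree $-T_p(M)$. Setting $\beta := h_{p-1}^{*}(\alpha^*)$, we have $\partial_p^{*}(\beta) = f \cdot \alpha^*$. Since $F_p^{*}$ is free and $f \neq 0$, the element $f \cdot \alpha^*$ is nonzero, so $\beta$ is a nonzero homogeneous element of $F_{p-1}^{*}$ of degree $d - T_p(M)$. Writing $F_{p-1} = \bigoplus_j S(-b_j)^{\beta_{p-1,b_j}(M)}$, so that $F_{p-1}^{*} = \bigoplus_j S(b_j)^{\beta_{p-1,b_j}(M)}$, the existence of a nonzero homogeneous element of degree $d - T_p(M)$ in $F_{p-1}^{*}$ requires some $b_j$ with $b_j + d - T_p(M) \ge 0$. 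Since every such $b_j$ satisfies $b_j \le T_{p-1}(M)$, we conclude $T_{p-1}(M) \ge T_p(M) - d$, which is the desired inequality.

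\emph{Step 4: The cyclic case.} For $M = S/I$ with $p = \pd(S/I)$, note that $t_1(S/I)$ is by definition the minimal degree of a generator of $I$. Picking $f \in I$ a homogeneous generator of that minimal degree and applying the first statement yields the second.

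\smallskip

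The main obstacle is purely notational: keeping the internal degree shifts consistent when passing from $h_{p-1}$ to its dual and pinning down the degree of $h_{p-1}^{*}(\alpha^*)$ in $F_{p-1}^{*}$. The non-vanishing of $\beta$ is essentially automatic from $\partial_p^{*}(\beta) = f \cdot \alpha^* \neq 0$, which is why no regularity hypothesis on $f$ beyond $f \neq 0$ is needed.
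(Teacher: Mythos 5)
Your proof is correct, and it follows the same overall strategy as the paper (dualize the minimal free resolution, exploit that $f$ annihilates the top $\ext$ module, then chase degrees), but with a cleaner implementation. The paper proceeds by observing that $f\cdot h_i^* \in \im(\partial_p^*)$ because $\ann(M) \subseteq \ann\bigl(\ext_S^p(M,S)\bigr)$, then writes out a preimage $\sum_j b_j g_j^*$ in the dual basis, compares coefficients entry by entry, and extracts the bound $\deg(c_{ij}) \le d$ from the resulting scalar identity $f = \sum_j b_j c_{ij}$. You instead invoke the null-homotopy for $f\cdot\id_{F_\bullet}$, which hands you an \emph{explicit} preimage $\beta = h_{p-1}^*(\alpha^*)$ of known degree $d - T_p(M)$; since $\partial_p^*(\beta) = f\alpha^* \neq 0$ forces $\beta \neq 0$ in $F_{p-1}^*$, the inequality $d - T_p(M) \ge -T_{p-1}(M)$ drops out in one line with no matrix bookkeeping. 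The two tools are equivalent in content (the null-homotopy is precisely how one proves $\ann(M)$ kills $\ext^i_S(M,S)$), but your packaging short-circuits the coefficient comparison and makes the degree count essentially automatic, which is a genuine simplification worth noting.
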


\begin{proof}
Let $F_\*$ denote the graded minimal free resolution of $M$ with differential maps $\partial_i:F_i \to F_{i-1}$.   Let $F^\ast_\*= \hom_S(F_\*,S)$ be the dual complex with differential maps $\partial_i^\ast:F_{i-1}^\ast \to F_i^\ast$.  Let $g_1,\ldots,g_r$ denote a homogeneous basis of $F_{p-1}$ and let $h_1,\ldots,h_s$ denote a homogeneous basis of $F_p$.  Let $g_1^\ast,\ldots,g_r^\ast$ and $h_1^\ast,\ldots,h_s^\ast$ be the corresponding dual bases of $F_{p-1}^\ast$ and $F_p^\ast$, respectively.  The map $\partial_p:F_p \to F_{p-1}$ is represented by a matrix $(c_{ij})$ with respect to these basis, and $\partial_p^\ast$ is represented by $(c_{ij})^\mathsf{T}$.  Since $F_{p+1}^\ast = 0$,  $\partial_{p+1}^\ast(h_i^\ast) = 0$ for all $i$.  Since $F_\*$ is minimal, for every $i$, $h_i^\ast$ is a nonzero minimal generator of $H^{p}(F_\*^\ast) = \ext_S^p(M,S)$.  Since $f \in \ann(M) \subseteq \ann\left( \ext_S^p(M,S)\right)$, $f\cdot h_i^\ast \in \im(\partial_{p}^\ast)$ for all $i$.

Now fix $i$.  Write $\partial_p(h_i) = \sum_{j = 1}^r c_{ij} g_j$, and notice that $c_{ij} \neq 0$ for some $j$ since $\partial_p$ is injective.  Note also that $\partial_p^\ast(g_j^\ast) = \sum_{i = 1}^s c_{ij} h_i^\ast$ for all $j$.  Therefore, 
since $f \cdot h_i^\ast \in \im(\partial_p^\ast)$, 
\[f \cdot h_i^\ast = \sum_{j = 1}^r b_j \partial_p^\ast(d_j^\ast) = \sum_{j = 1}^r b_j \sum_{i = 1}^s c_{ij} h_i^\ast,\]
for some $b_1,\ldots,b_r \in S$.
Then since at least one of the $c_{ij} \neq 0$ we have
\[\deg(f) + \deg(h_i^\ast) = \deg(b_j) + \deg(c_{ij}) + \deg(h_i^\ast),\]
and hence
\[d = \deg(f) = \deg(b_j) + \deg(c_{ij}) \ge \deg(c_{ij}).\]
Since $\partial_p(h_i) = \sum_{j = 1}^r c_{ij} g_j$ we have
\[\deg(h_i) = \deg(c_{ij}) + \deg(g_j) \le d + T_{p-1}(M)\]
for all $i$.  Therefore $T_p(M) \le d + T_{p-1}(M).$

The second statement follows by selecting $f \in I$ of degree $t_1(S/I)$.  Then since $f \in \ann(S/I)$ we have
\[T_p(S/I) \le T_{p-1}(S/I) + T_1(S/(f)) = T_{p-1}(S/I) + t_1(S/I).\]
\end{proof}

Note that $t_1(S/I)$ is just the minimal degree of a minimal generator of $I$.  Theorem~\ref{codim1} recovers and strengthens \cite[Corollary 3]{HS}, \cite[Theorem 2.1]{ES} and \cite[Theorem 4.4]{M}.  The theorem implies the following bound on regularity in terms of the first $p-1$ syzygies:

\begin{cor}\label{Ccodim1} Let $M$ be a graded $S$-module with $\pd(M) = p$.  Suppose $f \in \ann(M)$ and set $d = \deg(f)$.  
\[\reg(M) \le \max\left\{\max_{0 \le i \le p-1} \{T_i(M) - i\}, T_{p-1}(M)+d-p\right\}.\]

\end{cor}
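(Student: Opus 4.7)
The plan is to observe that the corollary is essentially a direct translation of Theorem~\ref{codim1} into the language of regularity, using only the definition $\reg(M) = \max_{0 \le i \le p}\{T_i(M) - i\}$. The key point is that the maximum defining $\reg(M)$ splits into two pieces: the contribution from the first $p$ steps of the resolution (indices $0 \le i \le p-1$) and the contribution from the last step ($i = p$).

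The first step is to write
\[
\reg(M) = \max\left\{\max_{0 \le i \le p-1}\{T_i(M) - i\},\ T_p(M) - p\right\}.
\]
The second step is to apply Theorem~\ref{codim1} to the second term: since $f \in \ann(M)$ with $\deg(f) = d$, we have $T_p(M) \le T_{p-1}(M) + d$, and hence $T_p(M) - p \le T_{p-1}(M) + d - p$. Substituting this bound into the max and using monotonicity of the maximum gives exactly the claimed inequality.

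There is no substantial obstacle here — the whole content of the corollary is in Theorem~\ref{codim1}, and the proof is just the observation that a bound on $T_p(M)$ in terms of $T_{p-1}(M)$ translates directly into a regularity bound because regularity is the maximum of the shifted maximal shifts. The only thing worth being careful about is not accidentally weakening the bound by passing through $\reg$-type quantities on the right-hand side; one wants to keep $T_{p-1}(M) + d - p$ as is rather than replacing $T_{p-1}(M)$ by $T_{p-1}(M) - (p-1)$ plus correction terms, since the former gives the sharper statement recorded in the corollary.
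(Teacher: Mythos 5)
Your proof is correct and matches the paper's own argument: split $\reg(M)=\max_{0\le i\le p}\{T_i(M)-i\}$ into the $i\le p-1$ terms and the $i=p$ term, then bound the latter via $T_p(M)\le T_{p-1}(M)+d$ from Theorem~\ref{codim1}. Nothing more is needed.
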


\begin{proof} By definition $\reg(M) \le \max_{0 \le i \le p} \{T_i(M) - i\}$.  By the previous theorem $T_p(M) - p \le T_{p-1} + d - p$.
\end{proof}

Next we prove a weaker bound on $\reg(M)$ but one that is still only dependent on the first $\pd(M) - \codim(M)$ steps of the resolution.  First we make a  notational definition for convenience.  Let $\N$ denote the set of non-negative integers.  Let $\a = (a_1,\ldots,a_q) \in \N^{q}$.   Define $|\a| = \sum_{i = 1}^q a_i  (i+1)$.  For instance $|(1,2,0,1)| = 1 \cdot 2 + 2 \cdot 3 + 0\cdot 4 + 1 \cdot 5 = 13$.

We now consider arbitrary modules $M$ with a fixed Cohen-Macaulay ideal $J \subseteq \ann(M)$.  The assumption that $J$ is Cohen-Macaulay is necessary to make the argument work.  
  In the case when $J$ is a complete intersection of linear forms, then $\overline{S} = S/J$ is a polynomial ring and the regularity of $M$ may be computed over $\overline{S}$.  Hence we focus on the case when $J$ has at least one generator of at least degree $2$.  Our main result is the following:

\begin{thm}\label{regthm} Let $M$ be a finitely generated $S$-module.  Suppose that $\ann(M)$ contains a homogeneous ideal $J$ such that $S/J$ is Cohen-Macaulay with $c = \codim(J) = \pd(S/J)$ and $T_1(S/J) \ge 2$.  Set $p = \pd(M)$.
Then 
\[\reg(M) \le \max_{\substack{0 \le i \le p-c\\\a \in \N^c\\|\a| \le p - c - i }}\left\{T_i(M) + \sum_{j = 1}^c a_j (T_j(S/J))\right\}  + T_c(S/J) - p.  \]
\end{thm}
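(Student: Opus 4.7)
The plan is to induct on $p = \pd(M)$. The base case $p = c$ forces $M$ to be Cohen-Macaulay, so Corollary~\ref{EHUcor} gives $\reg(M) \le T_0(M) + T_c(S/J) - c$, which agrees with the theorem's bound evaluated at $i = 0$, $\a = \mathbf{0}$.

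For the inductive step $p > c$, let $F$ be the initial free module of the minimal graded free resolution of $M$ and $\Omega M = \ker(F \twoheadrightarrow M)$. Since $JM = 0$ forces $JF \subseteq \Omega M$, we obtain the short exact sequence
\[0 \to M' \to F/JF \to M \to 0, \qquad M' := \Omega M / JF.\]
Now $F/JF$ is a direct sum of shifted copies of $S/J$, so $\pd(F/JF) = c$, $T_i(F/JF) = T_0(M) + T_i(S/J)$ for $0 \le i \le c$, and $\reg(F/JF) = T_0(M) + T_c(S/J) - c$. The long exact $\tor$-sequence forces $\pd(M') = p - 1$, and $J \subseteq \ann(M')$ since $M' \subseteq F/JF$. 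By Lemma~\ref{tiprop}(3), $\reg(M) \le \max\{\reg(F/JF),\, \reg(M') - 1\}$, so it remains to bound each summand by the theorem's right-hand side.

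To bound $\reg(M') - 1$, apply the induction hypothesis to $M'$ (with $p' = p - 1$) and substitute the bound $T_{i'}(M') \le \max\{T_0(M) + T_{i'}(S/J),\, T_{i'+1}(M)\}$ from Lemma~\ref{tiprop}(2). Each valid pair $(i', \a')$ appearing in the inductive bound for $M'$ is matched with a valid pair $(i, \a)$ for $M$: take $i = 0$ and let $\a$ be obtained from $\a'$ by incrementing $a_{i'}$ by one, whenever $T_0(M) + T_{i'}(S/J)$ dominates the max (then $|\a| = |\a'| + (i'+1) \le (p' - c - i') + (i'+1) = p - c$), and take $(i, \a) = (i'+1, \a')$ whenever $T_{i'+1}(M)$ dominates. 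In both cases the resulting expression matches the theorem's form for $M$, and the constant $T_c(S/J) - p' - 1$ becomes $T_c(S/J) - p$ via $p' = p - 1$.

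To bound $\reg(F/JF)$, one must exhibit $(i, \a)$ with $T_i(M) + \sum_j a_j T_j(S/J) \ge T_0(M) + (p - c)$ subject to $i + |\a| \le p - c$. Proposition~\ref{Pinc} applied to $M$ (using $\codim M \ge c$) gives $T_i(M) \ge T_0(M) + i$ for $0 \le i \le c$, and applied to $S/J$ combined with $T_1(S/J) \ge 2$ forces $T_j(S/J) \ge j + 1$ for $1 \le j \le c$, which yields the key inequality $\sum_j a_j T_j(S/J) \ge |\a|$. Hence any $(i, \a)$ with $i + |\a| = p - c$ works, and such a pair exists in each case: use $i = 0$ with $|\a| = p - c$ whenever achievable, and $i = 1$ with $|\a| = p - c - 1$ in the corner cases ($p - c = 1$, or $c = 1$ with $p - c$ odd $\ge 3$). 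This is the main obstacle: the discrepancy between the natural ``$-c$'' in $\reg(F/JF)$ and the theorem's ``$-p$'' is paid for precisely by the inequality $\sum a_j T_j(S/J) \ge |\a|$, which collapses without the hypothesis $T_1(S/J) \ge 2$.
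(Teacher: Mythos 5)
Your proof is correct and follows essentially the same strategy as the paper: reduce $M$ modulo $J$ to view it as a module over $R = S/J$, form the short exact sequence $0 \to M' \to F/JF \to M \to 0$ (which coincides with the paper's $0 \to K \to F \to M \to 0$, since $F/JF$ is exactly the $0$th term of the minimal $R$-free resolution of $M$), and induct on $p = \pd(M)$, using Corollary~\ref{EHUcor} for the base case $p=c$ and the key inequality $\sum_j a_j T_j(S/J) \ge |\a|$ (coming from $T_1(S/J) \ge 2$ and Proposition~\ref{Pinc}) to pay down the difference between the natural $-c$ in $\reg(F/JF)$ and the target $-p$.

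The one genuine difference is organizational rather than substantive: the paper separately dispatches the cases $c=1$ (via Corollary~\ref{Ccodim1}) and $p=c+1$ (via an explicit chain of inequalities), and its choice of $\a$ for the $\reg(F/JF)$ bound uses $\a=(m,1,0,\dots)$ in the odd case, which requires $c\ge 2$. You instead absorb both corner cases into a single uniform inductive argument by allowing $i=1$ with $|\a|=p-c-1$, using $T_1(M)\ge T_0(M)+1$ (again Proposition~\ref{Pinc}) to make up the missing unit. This is a mild streamlining that removes the need for the separate $c=1$ and $p=c+1$ branches, at the cost of having to observe that $|\a|$ can realize every nonnegative integer except $1$ when $c\ge 2$, and every even integer when $c=1$. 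Both arguments are valid; yours is a bit more uniform, the paper's a bit more explicit.
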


\begin{proof}
Set  $R = S/J$.  Since $M$ is an $R$-module, $p = \pd_S(M) \ge \mathrm{codim}(M) \ge \mathrm{codim}(R) = c$.  Note that if $c = 1$, then the statement follows from Corollary~\ref{Ccodim1}. 
%We first reduce to the case when $T_1(S/J) \ge 2$, that is, when $J$ has at least one generator of degree at least $2$.  If this is not the case, then $J$ is generated by $c$ linearly independent linear forms.  Note that $R$ is isomorphic to a polynomial ring in $n -c$ variables and that the minimal free resolution $\mathbf{F}_\ast$ of $M$ over $S$ has the structure of the minimal free resolution of $M$ over $R$ tensored by a Koszul complex of length $n-c$.  In particular $\reg_R(M) = \reg_S(M)$ and $T_i^S(M) = \max_i\{T_i^R(M)$

We proceed by induction on $p$.

  If $p = c$, then $M$ is Cohen-Macaulay.  Hence $\reg(M) \le T_0(M) + T_c(S/J) - c$ by Corollary~\ref{EHUcor}, which proves the base case, since $\underline{0} = (0,0,\ldots,0) \in \N^c$ is the only vector $\a \in \N^c$ such that $|\a| \le 0$.
  
  If $p > c$, then consider the short exact sequence of $R$-modules (and thus also $S$-modules)
\[0 \to K \to F \to M \to 0.\]
Here $F = \oplus_j R(-d_j)^{\beta_{0,j}(M)}$ is the $0$th step in a minimal $R$-free resolution of $M$, and $K = \ker(F \onto M)$.  (In other words, $K$ is the module of first syzygies of $M$ over $R$.)   Since $F$ is free over $R$, an $S$-free resolution of $F$ is merely an $S$-free resolution of $S/J$ tensored by $\oplus_j S(-d_j)^{\beta_{0,j}(M)}$ over $S$.  In particular, $\pd_S(F) = \pd_S(R) = c < p = \pd_S(M)$ and $T_i(F) = T_0(F) + T_i(S/J)$ for $0 \le i \le c$.  Moreover, by our choice of $F$ we have $T_0(F) = T_0(M)$.  
 It follows that $\pd_S(K) = \pd_S(M) - 1 = p - 1$.  Since $\pd_S(S/J) = c < p = \pd_S(M)$, $T_p(M) \le T_{p-1}(K)$.

  If $p = c+1$, then $\dim(M) - \depth(M) = \pd(M) - \codim(M) = 1$.  The only vector $\a \in \N^c$ with $|\a| \le 1$ is again $\underline{0}$.  Again by Corollary~\ref{EHUcor}, we have $T_p(M) \le T_1(M) + T_{p-1}(S/J)$.  It follows from Proposition~\ref{Pinc} that $\reg(M) = \max\{T_p(M) - p, T_{p-1}(M) - (p-1)\}$.  Thus it suffices to show that $T_{p-1}(M) \le T_1(M) + T_{p-1}(S/J) - 1$.  This follows by noting
  \begin{align*}
  T_{p-1}(M) &\le \max\{T_{p-1}(F), T_{p-2}(K)\}\\
  &\le \max\{T_0(F) + T_{p-1}(S/J), T_{p-1}(K) - 1\}  \\
  &\le \max\{T_0(M) + T_{p-1}(S/J), T_0(K) + T_{p-1}(S/J) - 1\}\\
  &\le T_1(M)  + T_{p-1}(S/J) - 1.
  \end{align*}
  The first inequality is just Lemma~\ref{tiprop}; the second inequality follows from the discussion of $T_i(F)$ and from Proposition~\ref{Pinc}; the third inequality follows from Theorem~\ref{EHUthm2} applied to $K$; the final inequality is again Proposition~\ref{Pinc} and Lemma~\ref{tiprop}.

%If $p = c + 1$, then by Proposition~\ref{Pinc} it follows that $\reg(M) = \max\{T_p(M) - p, T_{p-1}(M) - (p-1)\}$.  Again by Corollary~\ref{EHUcor}, we have $T_p(M) \le T_{p-c}(M) - T_c(S/J)$.   Hence we may assume that $c \ge 2$, $T_1(S/J) \ge 2$ and $T_2(S/J) \ge 3$.  By Lemma~\ref{tiprop}, $T_{p-1}(M) \le \max\{T_{p-1}(F), T_{p-2}(K)\}$.  By Proposition~\ref{Pinc}, $T_{p-2}(K) \le T_{p-1}(K)$.  Since $K$ is Cohen-Macaulay, by Theorem~\ref{EHUcor} again $T_{p-1}(K) \le T_0(K) + T_c(S/J) \le T_1(M) + T_c(S/J)$.  

 If $p > c+1$, then since $J \subset \ann(K)$ and since $\pd(K) < p$, by the inductive hypothesis we have
 \[\reg(K) \le T_i(K) + \sum_{j = 1}^c a_j T_j(S/J)  + T_c(S/J) - (p-1),  \]
for some $0 \le i \le (p-1)-c$ and some $\a \in \N^c$ with $|\a| \le (p-1)-c-i$.  Using the short exact sequence above we have 
\[\reg(M) \le \max\{\reg(F), \reg(K) - 1\}.\]
Since $F$ is Cohen-Macaulay,
\begin{align*}
\reg(F) &= T_c(F) - c \\
&= T_0(F) + T_c(S/J) - c \\
&= T_0(M) + T_c(S/J) - c.\\
\end{align*}
If $p-c$ is even, say $p-c = 2m$ for some nonnegative integer $m$, then we set $\a = (m,0,0,\ldots)$ and note that $|\a| = 2m = p-c$. 
If $p-c$ is odd, since $p > c+1$, $p$ must be at least $3$.  Hence we can write $p-c = 2m + 3$ for some nonnegative integer $m$.  Noting that $c \ge 2$ by assumption, we set $\a = (m,1,0,0,\ldots)$ and see that again $|\a| = 2m + 3 = p-c$.  Since $T_1(S/J) \ge 2$  we have $T_j(S/J) \ge j+1$ for $1 \le j \le c$.  Therefore $\sum_{j=1}^c a_j T_j(S/J) \ge \sum_{j=1}^c a_j(j+1) = |\a| = p-c$;  thus, in either case we have
\[\reg(F) \le T_0(M) + \sum_{j=1}^c a_j T_j(S/J) +T_c(S/J) - p.\]
Hence we may assume that 
\[\reg(M) \le \reg(K) - 1 = T_i(K) + \sum_{j = 1}^c a_j T_j(S/J)  + T_c(S/J) - p,\]
for some $0 \le i \le c$ and some $\a \in \N^c$ with $|\a| \le (p-1) - c - i$.
By Lemma~\ref{tiprop}, $T_i(K) \le T_{i+1}(M)$ or $T_i(K) \le T_i(F)$.  
In the former case we have
\[\reg(M) \le T_{i+1}(M)  + \sum_{j = 1}^c a_j T_j(S/J)  + T_c(S/J) - p .\]
In the latter case, since $T_i(F) = T_0(M) + T_i(S/J)$, we have
\begin{align*}
\reg(M) %&\le T_i(F)-i + \sum_{j = 1}^c a_j (T_j(S/J)-j)  + T_c(S/J) - c -  1 \\
&\le T_0(M) + T_i(S/J) + \sum_{j = 1}^c a_j T_j(S/J)  + T_c(S/J) - p\\
&= T_0(M) + \sum_{j = 1}^c a_j' T_j(S/J) + T_c(S/J) - p,\\
\end{align*}
where 
\[a_j' = \begin{cases} a_j & \text{ if } j \neq i\\
a_j + 1 & \text{ if } j = i.
\end{cases}\]
Finaly we note that 
\begin{align*}
|\a'| &= \sum_{j = 1}^c a_j'(j+1) \\
&= (i+1) + \sum_{j=1}^c a_j(j+1) \\
&= (i+1) + |\a| \\
&\le (i + 1) + (p-1) - c - i\\
& = p - c 
\end{align*}
as desired.
\end{proof}

As stated, the bound may be a bit hard to interpret.  We include a specific instance of its application below.

\begin{ex}
Suppose $\pd(M) = 7$ and $\codim(M) = 3$.  Further suppose that $J = (x,y,z)^2 \subset \ann(M)$.  Then $T_i(S/J) = i + 1$ for $1 \le i \le 3$.  We consider vectors $\a  \in \N^3$ with $|\a| = p - c \le 4$.  No vectors satisfy $|\a| = 1$; only the vectors $(1,0,0)$ and $(0,1,0)$ satisfy $|\a| = 2$ and $3$, respectively.  The only such vectors $\a$ with $|\a| = 4$ are $(2,0,0)$ and $(0,0,1)$.  Therefore, the previous theorem implies that
\begin{align*}
\reg(M) &\le \max\{T_4(M) + T_3(S/J) - 7, T_2(M) + T_1(S/J) + T_3(S/J) - 7, \\
&\phantom{\le \max T}T_1(M) + T_2(S/J) + T_3(S/J) - 7,  T_0(M) + 2T_3(S/J) - 7, \\
&\phantom{\le \max T}T_0(M) + 2T_1(S/J) + T_3(S/J) - 7 \}\\
& = \max\{T_4(M) - 3, T_2(M) - 1, T_1(M) , T_0(M) + 1\}\\
& = \max\{T_4(M) - 3, T_2(M) - 1\},
\end{align*}
where the second equality follows since $\codim(M) = 3$ and hence $T_0(M) < T_1(M) < T_2(M) < T_3(M)$ by Proposition~\ref{Pinc}.  
\end{ex}

As a corollary to our main theorem, we get a weak upper bound on $T_p(M)$ in terms of the same data.

\begin{cor}\label{main} Let $M$ be a finitely generated $S$-module.  Suppose that $\ann(M)$ contains a homogeneous ideal $J$ such that $S/J$ is Cohen-Macaulay with $c = \codim(J) = \pd(S/J)$.  Set $p = \pd(M)$.  Then
\[T_p(M) \le \max_{\substack{0 \le i \le p-c\\\a \in \N^c\\|\a| \le p - c - i }}\left\{T_i(M) + \sum_{j = 1}^c a_j T_j(S/J)\right\} + T_c(S/J).\]

\begin{proof}
Since $T_p(M) \le \reg(M) + p$, the inequality follows from Theorem~\ref{regthm}.
\end{proof}

%In particular, when $\ann(M)$ contains a regular sequence of forms of degrees $d_1 \ge d_2 \ge \cdots \ge d_q$, then 
%\[T_p(M) \le \max_{\substack{0 \le i \le p-c\\\a \in \N^q\\|\a| = p - q - i }}\left\{T_i(M) + \sum_{j = 1}^c  \sum_{k = 1}^j a_j d_k \right\} + \sum_{j = 1}^q d_j  \]

\end{cor}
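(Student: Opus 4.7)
The plan is to deduce Corollary~\ref{main} as an immediate consequence of Theorem~\ref{regthm}, using only the elementary relationship between $T_p(M)$ and $\reg(M)$.

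First I would recall the defining equation $\reg(M) = \max_{0 \le i \le p}\{T_i(M) - i\}$, which forces $T_p(M) - p \le \reg(M)$, equivalently $T_p(M) \le \reg(M) + p$. This inequality requires no hypothesis on $M$ and is purely formal; it is the only ``new'' ingredient beyond what Theorem~\ref{regthm} already provides.

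Next I would invoke Theorem~\ref{regthm}, whose hypotheses match those of Corollary~\ref{main} (tacitly including $T_1(S/J) \ge 2$; if $J$ contains a linear form the situation reduces to working over a smaller polynomial ring as discussed just before Theorem~\ref{regthm}, so this assumption is not restrictive). That theorem produces the upper bound
\[\reg(M) \;\le\; \max_{\substack{0 \le i \le p-c\\ \a \in \N^c\\ |\a| \le p-c-i}}\Bigl\{T_i(M) + \sum_{j=1}^c a_j\,T_j(S/J)\Bigr\} + T_c(S/J) - p.\]
Substituting this into $T_p(M) \le \reg(M) + p$ and cancelling the compensating $\pm p$ on the right-hand side yields precisely the claimed inequality.

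There is essentially no obstacle to overcome here: the corollary is a one-line bookkeeping consequence of the main theorem, and all of the real work has already been absorbed into the inductive proof of Theorem~\ref{regthm}. Were one to attempt a standalone proof of Corollary~\ref{main}, the natural route would be to retrace that same induction on $p = \pd(M)$ along the short exact sequence $0 \to K \to F \to M \to 0$ (with $F$ the $0$th free module in an $R$-resolution of $M$), but tracking $T_p$-bounds instead of regularity-bounds at each stage; this would reproduce the same case analysis and the same combinatorial definition of $|\a|$, with a cleaner final additive constant of $T_c(S/J)$ rather than $T_c(S/J)-p$.
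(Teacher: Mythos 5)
Your proof is correct and takes exactly the same route as the paper: apply $T_p(M) \le \reg(M) + p$ and substitute the bound from Theorem~\ref{regthm}. You are actually slightly more careful than the paper itself in noting (and dispatching) the discrepancy that Theorem~\ref{regthm} carries the hypothesis $T_1(S/J) \ge 2$ while Corollary~\ref{main} does not state it.
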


Finally we state a few special cases of the main theorem in the cyclic case.

\begin{cor} Let $I$ be an ideal of $S$ with $p = \pd(S/I)$ and $c = \codim(I)$.  If $I$ contains a regular sequence of forms of common degree $d$ then
\[\reg(S/I) \le \max_{\substack{0 \le i \le p-c }}\left\{T_i(S/I) + (p-i)\, d \right\}-p.  \]
\end{cor}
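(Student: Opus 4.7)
The plan is to apply Theorem~\ref{regthm} with $J = (f_1,\ldots,f_c)$, where $f_1,\ldots,f_c \in I$ is the hypothesized regular sequence of forms of common degree $d$, taken of length $c = \codim(I)$. Since $J$ is a complete intersection, $S/J$ is Cohen-Macaulay with $\codim(J) = \pd(S/J) = c$, and its Koszul resolution immediately gives $T_j(S/J) = jd$ for $0 \le j \le c$. Because $J \subseteq I \subseteq \ann(S/I)$, the hypotheses of Theorem~\ref{regthm} are in place whenever $d \ge 2$.

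For the main case $d \ge 2$, Theorem~\ref{regthm} yields
\[\reg(S/I) \le \max_{\substack{0 \le i \le p-c \\ \a \in \N^c,\; |\a| \le p-c-i}} \Bigl\{T_i(S/I) + d \sum_{j=1}^c j a_j \Bigr\} + cd - p,\]
and the proof then reduces to a brief arithmetic step: since $\sum_{j=1}^c j a_j \le \sum_{j=1}^c (j+1) a_j = |\a| \le p - c - i$, we have $d\sum_j j a_j + cd \le d(p-c-i) + cd = d(p-i)$, so the dependence on $\a$ collapses and only the outer range $0 \le i \le p-c$ survives, producing exactly $\max_i\{T_i(S/I) + (p-i)d\} - p$.

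The remaining case $d = 1$ requires a separate, entirely standard base-change argument: $J$ is then generated by a linear regular sequence contained in $I$, so $\bar S := S/J$ is a polynomial ring, $\bar I := I/J \subseteq \bar S$ satisfies $S/I \cong \bar S/\bar I$, $\pd_{\bar S}(\bar S/\bar I) = p - c$, and $\reg_S(S/I) = \reg_{\bar S}(\bar S/\bar I)$. Lifting the minimal $\bar S$-resolution through the Koszul complex on $J$ produces an $S$-free resolution whose graded shifts in each homological degree include those of the $\bar S$-resolution, giving $T_i^{\bar S}(\bar S/\bar I) \le T_i(S/I)$ for every $i$; hence $\reg(S/I) \le \max_{0 \le i \le p-c}\{T_i(S/I) - i\}$, which matches the statement when $d = 1$. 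The only real obstacle is this boundary case, forced by the hypothesis $T_1(S/J) \ge 2$ in Theorem~\ref{regthm}; for $d \ge 2$ the corollary is essentially a direct substitution.
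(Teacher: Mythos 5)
Your proposal is correct and follows essentially the same route as the paper: apply Theorem~\ref{regthm} to the complete intersection $J = (f_1,\ldots,f_c)$, use the Koszul resolution to read off $T_j(S/J) = jd$, and then collapse the dependence on $\a$ via the estimate $\sum_j j a_j \le \sum_j (j+1) a_j = |\a| \le p - c - i$. The one place you are more careful than the paper is the $d = 1$ boundary: the paper's proof of this corollary invokes Theorem~\ref{regthm} without checking the hypothesis $T_1(S/J) \ge 2$ (the linear case is only addressed informally in the remark preceding that theorem), whereas you explicitly supply the base-change argument over $\bar S = S/J$, correctly using that the minimal $S$-resolution of $S/I$ is the tensor product of the minimal $\bar S$-resolution with the Koszul complex on the linear forms, so $T_i^{\bar S}(\bar S/\bar I) \le T_i(S/I)$.
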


\begin{proof} Let $f_1,\ldots,f_c$ be a regular sequence of forms of degree $d$ contained in $I$.  Then $J = (f_1,\ldots,f_c) \subset \ann(S/I)$ is a Cohen-Macualay ideal and, since $S/J$ is resolved by a Koszul complex on $f_1,\ldots,f_c$, we have $T_i(S/J) = di$ for $0 \le i \le c$.   By Theorem~\ref{regthm} we have for some $0 \le i \le p-c$ and some $\a \in \N^c$ with $|\a| \le p-c-i$ that
\begin{align*}
\reg(S/I) &\le  T_i(S/I) + \sum_{j = 1}^c a_j (T_j(S/J))  + T_c(S/J) - p\\
&= T_i(S/I) + \sum_{j = 1}^c (a_j dj)  + dc - p\\
&\le T_i(S/I) + d\left(\sum_{j = 1}^c (a_j (j+1))\right)  + dc - p\\
&\le T_i(S/I) + d(p-c-i)  + dc - p\\
&= T_i(S/I) + d(p-i)  - p\\
\end{align*}

\end{proof}

Thus we get a bound on the regularity of any cyclic module $S/I$ purely in terms of the first $\pd(S/I) - \codim(I)$ maximal graded shifts:

\begin{cor}\label{maincor} Let $I$ be an ideal of $S$ with $p = \pd(S/I)$ and $c = \codim(I)$.  Then
\[\reg(S/I) \le \max_{\substack{0 \le i \le p-c }}\left\{T_i(S/I) + (p-i)\, T_1(S/I) \right\} - p.  \]
\end{cor}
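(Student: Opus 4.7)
The plan is to deduce this from the preceding corollary, which already supplies the bound $\reg(S/I) \le \max_{0 \le i \le p-c}\{T_i(S/I) + (p-i)d\} - p$ whenever $I$ contains a regular sequence of $c$ forms of common degree $d$. Specializing to $d = T_1(S/I)$ yields the stated inequality, so everything reduces to exhibiting a regular sequence of $c = \codim(I)$ forms in $I$, each of degree exactly $T_1(S/I)$. The main obstacle is that $I$ need not be generated by forms of a common degree, so some work is required to produce enough degree-$T_1(S/I)$ elements inside $I$.

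To set this up, write $d = T_1(S/I)$ and let $I_d := I \cap S_d$, viewed as a $k$-subspace of $S_d$; set $I' := (I_d) \subseteq S$. First I would verify that $\codim(I') = c$. Since $I' \subseteq I$, one inclusion is immediate. For the other, let $g_1, \ldots, g_r$ be minimal generators of $I$ with $\deg(g_i) = d_i \le d$, and observe that $g_i \cdot S_{d-d_i} \subseteq I_d \subseteq I'$. If $\p$ is a prime containing $I'$ with $\p \ne \mm$, then for each $i$ with $d_i < d$ we have $g_i \cdot \mm^{d-d_i} \subseteq \p$, which (since $\mm^{d-d_i} \not\subseteq \p$) forces $g_i \in \p$; for each $i$ with $d_i = d$, $g_i \in I_d \subseteq \p$ already. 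Hence $I \subseteq \p$, and so every minimal prime of $I'$ distinct from $\mm$ has codimension at least $c$, giving $\codim(I') = c$.

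After assuming without loss of generality that $k$ is infinite (which preserves all Betti numbers of $S/I$), I would build the regular sequence $f_1, \ldots, f_c \in I_d$ inductively by prime avoidance. Given $f_1, \ldots, f_{j-1}$ with $j \le c$, the associated primes of $(f_1, \ldots, f_{j-1})$ all have codimension $j - 1 < c$, so none of them contains $I'$, and in particular none contains the $k$-subspace $I_d$. Because $k$ is infinite, $I_d$ cannot be covered by a finite union of proper $k$-subspaces, so a general element of $I_d$ avoids all of these associated primes and furnishes $f_j$. The resulting regular sequence $f_1, \ldots, f_c \in I$ of degree-$d$ forms feeds directly into the preceding corollary to deliver the stated bound.
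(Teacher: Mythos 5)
Your proposal is correct and follows the same route as the paper: both reduce to the preceding corollary by producing a regular sequence of length $c$ inside $I$ consisting of forms of degree $T_1(S/I)$, then plug $d = T_1(S/I)$ into the resulting bound. The paper states this in one line, asserting only that $I$ contains such a regular sequence ``of degree at most $T_1(S/I)$,'' whereas you carry out the standard prime-avoidance construction in full (passing to an infinite field, replacing $I$ by the ideal generated by $I_d$, checking it still has codimension $c$, and then choosing general degree-$d$ elements avoiding the finitely many associated primes at each stage). Your version is actually a bit cleaner on one point: the preceding corollary, as stated, requires a regular sequence of forms of a \emph{common} degree $d$, and you explicitly produce such a sequence in degree exactly $T_1(S/I)$, whereas the paper's phrase ``degree at most $T_1(S/I)$'' would strictly require either reproving the preceding corollary for mixed degrees or a small padding argument. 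So the content is the same, but your write-up fills in the detail the paper leaves to the reader.
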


\begin{proof} This follows from the previous corollary since $I$ contains a regular sequence of forms of length $c$ and of degree at most $T_1(S/I)$.
\end{proof}

This upper bound is likely not tight for all values of $p$ and $c$, but it seems close to a best possible result in the following sense:

Fix a value of $c$.  A rephrasing of the Ullery's result implies that for $p \gg 0$ there is no bound on $\reg(S/I)$ purely in terms of $T_1(S/I),\ldots,T_{p-c-1}(S/I)$, where $p = \pd(S/I)$ and $c = \codim(I)$.  Thus the result above, which gives a linear bound on $\reg(S/I)$ in terms of $T_1(S/I),\ldots,T_{p-c}(S/I)$ and $\pd(S/I)$ uses minimal data in some sense.  Where Ullery's result shows that almost any sequences of increasing positive integers appear as the initial maximal graded shifts of some cyclic module or ideal, our result restricts what can happen at the end of the resolution.

\section*{Acknowledgements}

The author thanks Lance Miller, Irena Peeva and Hema Srinivasan for useful feedback on an earlier draft of this paper.

%\bibliographystyle{amsplain}
%\bibliography{bibht2}

\end{document}